\newenvironment{wlist}
{\vspace{-10pt}
\begin{list}{}
{\setlength{\labelwidth}{15mm}
\setlength{\partopsep}{0pt}
\setlength{\parskip}{0pt}
\setlength{\topsep}{0pt}
\setlength{\itemsep}{0pt}
\setlength{\parsep}{0pt}
\setlength{\labelsep}{10pt}
\setlength{\leftmargin}{15mm}}
\item[]}
{\end{list}
\vspace{2pt}
\smallskip}
\newtheoremstyle{theorem}
  {10pt}          
  {10pt}  
  {\sl}  
  {\parindent}     
  {\bf}  
  {. }    
  { }    
  {}     
\theoremstyle{theorem}
\newtheorem{theorem}{Theorem}[section]
\newtheorem{corollary}[theorem]{Corollary}
\newtheoremstyle{defi}
  {8pt}          
  {8pt}  
  {\rm}  
  {\parindent}     
  {\bf}  
  {. }    
  { }    
  {}     
\theoremstyle{defi}
\newtheorem{problem}{Problem}
\newtheorem{lemma}[theorem]{Lemma}
\date{}
\begin{document}
\title{Wiener index and Steiner 3-Wiener index of a graph}
\maketitle
\vspace{-20mm}
\begin{center}
\author{Matja\v{z} Kov\v{s}e,   
  \and
    Rasila V A,
    \and
    	Ambat Vijayakumar
 }
	\end{center}
\newcommand{\Addresses}{
  \bigskip
  \footnotesize
  
Matja\v{z} Kov\v{s}e, School of Basic Sciences,
IIT Bhubaneswar,
India, \texttt{matjaz.kovse@gmail.com}
\par\nopagebreak
  Rasila V A,
  Department of Mathematics, Cochin University of Science and Technology, India, \texttt{17rasila17@gmail.com} \par\nopagebreak  
  Ambat Vijayakumar,
	 Department of Mathematics, Cochin University of Science and Technology, India, \texttt{vambat@gmail.com}}

\begin{abstract}
Let $S$ be a set of vertices of a connected graph $G$. The Steiner distance of $S$ is the minimum size of a connected subgraph of $G$ containing all the vertices of $S$. The sum of all Steiner distances on sets of size $k$ is called the Steiner $k$-Wiener index, hence for $k=2$ we get the Wiener index. The modular graphs are graphs in which every three vertices $x, y$ and $z$ have at least one median vertex $m(x,y,z)$ that belongs to shortest paths between each pair of $x, y$ and $z$. The Steiner 3-Wiener index of a modular graph is expressed in terms of its Wiener index. As a corollary formulae for the Steiner 3-Wiener index of Fibonacci and Lucas cubes are obtained.
\end{abstract}

\noindent
{\bf MR Subject Classifications:} 05C12
\bigskip\noindent

\noindent
{\bf Keywords}: Distance in graphs, Steiner distance, Wiener index, $k$-Steiner Wiener index, trees, modular graphs, Fibonacci cubes, Lucas cubes

\section{Introduction}
All graphs in this paper are simple, finite and undirected.  
If $G$ is a connected graph
and $u, v \in V (G)$, then the {\em (geodetic) distance} $d_G(u, v)$ (or simply $d(u, v)$ if it is clear we are dealing with $G$) between $u$ and $v$ is the
number of edges on a shortest path connecting $u$ and $v$. The {\em Wiener index $W (G)$} of a connected graph $G$ is defined by
$$W (G) =\sum_ {\{u,v\} \in V(G)} d(u,v).$$
It has been introduced in 1947 by Wiener who showed in \cite{wiener1947} that there exist correlations between the boiling points of paraffins and their molecular structure.
The {\em average distance} $\mu(G)$ of a graph $G$ is defined to be the average of all
distances between pairs of vertices in $G$, i.e.
$$
\mu(G)={n\choose 2}^{-1}\sum_{\{u,v\}\subseteq V(G)}d(u,v).
$$
Hence $W (G) = {n\choose 2} \mu(G)$.

In \cite{chartrand1989} Chartrand, Oellermann, Tian  and Zou introduced 
The {\em Steiner distance} of a graph as a natural generalization of the geodetic 
distance. Let $S$ be a set of vertices of a connected graph $G$. The Steiner distance $d(S)$  of $S$ is the minimum size (the number of edges) of a connected subgraph $H$ of $G$ containing all the vertices of $S$. Clearly $H$ is a subtree of $G$, called {\em Steiner tree} connecting vertices of $S$. If $S = \{u, v\}$, then the Steiner distance coincides with the geodetic distance. See the survey on Steiner distance in \cite{mao2017survey} for known results.

The {\em average $k$ Steiner distance} $\mu_k(G)$ of a graph $G$ has been 
introduced by Dankelmann, Oellermann, and Swart in
\cite{dankelmann1996average}, as the average of the Steiner
distances of all subsets of $V(G)$ of size $k$, i.e.
$$
\mu_k(G)={n\choose k}^{-1}\sum_{S\subseteq V(G), \ |S|=k}d(S).
$$

For $2\leq r<k$, Dankelmann, Oellermann, and Swart \cite{dankelmann1996average} established a relation between $\mu_r(G)$, $\mu_{k+1-r}(G)$, and $\mu_k(G)$.
\begin{theorem}{\upshape \cite{dankelmann1996average}}\label{th4-1}
Let $G$ be connected graph and $2\leq r\leq k-1$. Then
$$
\mu_k(G)\leq \mu_r(G)+\mu_{k+1-r}(G).
$$
\indent Moreover for $k\geq 3$, $\mu_k(G)\leq (k-1)\mu(G)$.
\end{theorem}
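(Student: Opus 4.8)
The plan is to derive both statements from a single subadditivity property of the Steiner distance combined with a double-counting identity.

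First I would record the basic lemma: if $A, B \subseteq V(G)$ satisfy $A \cup B = S$ and $A \cap B \neq \emptyset$, then $d(S) \le d(A) + d(B)$. This is immediate from the definition. Take Steiner trees $T_A$ and $T_B$ realizing $d(A)$ and $d(B)$; since they share at least one vertex, $T_A \cup T_B$ is a connected subgraph of $G$ spanning all of $S = A \cup B$, whence $d(S) \le |E(T_A \cup T_B)| \le |E(T_A)| + |E(T_B)| = d(A) + d(B)$.

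Next, to obtain the main inequality, fix a $k$-set $S$ and consider all ways to split it into a pair $(A,B)$ with $|A| = r$, $|B| = k+1-r$, $A \cup B = S$, and $|A \cap B| = 1$. The point of the overlap being exactly one vertex is that $r + (k+1-r) = k+1 = |S|+1$, so such pairs correspond precisely to choosing the shared vertex $x \in S$ and then distributing the remaining $k-1$ vertices of $S$ between $A$ and $B$; there are $k\binom{k-1}{r-1}$ of them. For each such pair the lemma gives $d(S) \le d(A) + d(B)$. I would then sum this over all pairs for fixed $S$ and over all $k$-sets $S$. The core of the argument is the counting step: for a fixed $r$-set $A$ one counts in how many configurations it appears (choosing the remaining $k-r$ vertices of $S$ outside $A$ and then the shared vertex inside $A$), and likewise for a fixed $(k+1-r)$-set $B$. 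A short binomial computation shows that after summation each of $\sum d(S)$, $\sum d(A)$, $\sum d(B)$ is multiplied by the same constant $\frac{n!}{(r-1)!\,(k-r)!\,(n-k)!}$. Dividing by this common factor turns the summed inequality directly into $\mu_k(G) \le \mu_r(G) + \mu_{k+1-r}(G)$.

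Finally, the \emph{moreover} part follows by induction on $k$ using the special case $r = 2$, which reads $\mu_k \le \mu_2 + \mu_{k-1} = \mu + \mu_{k-1}$ since $\mu_2 = \mu$. The base case $k=3$ gives $\mu_3 \le 2\mu = (k-1)\mu$, and the inductive step yields $\mu_k \le \mu + (k-2)\mu = (k-1)\mu$. I expect the main obstacle to be the coefficient-matching in the double count: verifying that the three binomial weights collapse to the same value. Arranging the overlap to have size exactly one, so that $|A| + |B| = |S| + 1$, is precisely what forces all weights to coincide and is therefore the crux of the proof.
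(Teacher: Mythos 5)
Your proposal is correct. Note, however, that the paper contains no proof of Theorem \ref{th4-1}: it is stated as a quoted result of Dankelmann, Oellermann and Swart \cite{dankelmann1996average}, so the only comparison available is with the original argument, and yours is essentially that classical one --- subadditivity of the Steiner distance ($d(S)\le d(A)+d(B)$ when $A\cup B=S$ and $A\cap B\neq\emptyset$, obtained by gluing two Steiner trees at a common vertex) combined with a double count over configurations $(S,A,B)$ with $|A|=r$, $|B|=k+1-r$ and $A\cap B$ a single vertex of $S$. Your crux coefficient-matching step checks out:
$$
k\binom{k-1}{r-1}\binom{n}{k}\;=\;r\binom{n-r}{k-r}\binom{n}{r}\;=\;(k+1-r)\binom{n-k-1+r}{r-1}\binom{n}{k+1-r}\;=\;\frac{n!}{(r-1)!\,(k-r)!\,(n-k)!},
$$
so dividing the summed inequality by this common factor does yield $\mu_k(G)\le\mu_r(G)+\mu_{k+1-r}(G)$, and your induction with $r=2$ (using $\mu_2=\mu$ and base case $k=3$) correctly delivers $\mu_k(G)\le(k-1)\mu(G)$ for all $k\ge 3$.
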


The bounds in Theorem \ref{th4-1} are sharp for the complete graph.
In \cite{dankelmann1996average} it has been noted that for
each connected graph $G$ of order $n$ and $3\leq k\leq n$,
$$
\mu_k(G)\leq \frac{k+1}{k-1}\mu_{k-1}(G).
$$

Regarding the lower bound for $\mu_k(G)$ in terms of $\mu(G)$, the conjecture has been posed in \cite{dankelmann1996average}  that the smallest ratio $\mu_k(G)/\mu(G)$ taken over all connected graphs $G$ of order $n$ where $n\geq k$, is attained if $G$ is the path. More formally:
If $G$ is a connected graph of order $n$ and $3\leq k\leq n$, then

\begin{equation}
\label{formula:mukmu}
\mu_k(G)\geq 3\frac{k-1}{k+1}\mu(G).
\end{equation}

In \cite{dankelmann1996average} it has been proved that the conjecture
is true for $k=3$ and $k=n$. Later the conjecture has been disproved in  \cite{jiang2006average} by Jiang, who showed that for all positive integers $m\geq2$, we have
\begin{align*}
&f(m)\leq \text{ inf }  \Big\{\dfrac{\mu_{m}(G)}{\mu(G)}: G  \text{ is connected and } |V(G)| \geq m \Big \}
\leq 2-\dfrac{1}{2^{m-2}}
\end{align*}
where
$$ f(m)= \begin{cases}  2-\dfrac{2}{m}   & \text{ for even }m ,\\
 2-\dfrac{2}{m+1}  &  \text{ for odd }m. \end{cases}$$

Hence, in particular,
$$ \displaystyle{ \lim_{m\rightarrow\infty}} \text{ inf }  \Big\{\dfrac{\mu_{m}(G)}{\mu(G)}: G  \text{ is connected and } |V(G)| \geq m \Big \}=2.
$$

In \cite{gutman2016discuss}, Li, Mao and Gutman introduced the Steiner $k$-Wiener index $SW_{k}(G)$ of a connected graph $G$ as $$SW_k(G) =  \sum_ {\substack{S\subseteq V(G)\\ | S| =k}} d(S).$$
For $k = 2$, the Steiner $k$-Wiener index coincides with the Wiener index. The average $k$-Steiner distance $\mu_k (G)$ is related to the $k$-Steiner Wiener index via the equality $\mu_k (G) = SW_{k}(G)/ {n \choose k}$. In \cite{gutman2016discuss} the formulae for the exact values of the $k$-Steiner Wiener index of several simple families of graphs have been derived together with sharp lower and upper bounds for general graphs and trees. Moreover it has been shown that the Steiner $3$-Wiener index of a tree of order $n$ is directly related to the ordinary Wiener index as follows.

\begin{theorem}\label{thm:sw3trees}
\begin{equation}
SW_3(T) = \frac{n-2}{2}\,W(T).
\end{equation}
\end{theorem}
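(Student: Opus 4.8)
The plan is to reduce the Steiner distance of every triple to a combination of ordinary pairwise distances, and then exploit the linearity of summation together with a double-counting argument. The crucial structural fact about trees is that any three vertices $x,y,z$ determine a unique minimal connecting subtree, and this subtree is a union of three geodesics meeting at a single vertex $m$, the median of $x,y,z$. Writing $a=d(x,m)$, $b=d(y,m)$, $c=d(z,m)$, the Steiner tree has exactly $a+b+c$ edges, while the pairwise distances are $d(x,y)=a+b$, $d(y,z)=b+c$, and $d(x,z)=a+c$. First I would establish the resulting identity
$$
d(\{x,y,z\}) = \tfrac{1}{2}\bigl(d(x,y)+d(y,z)+d(x,z)\bigr),
$$
valid for every triple of vertices in a tree, since summing the three pairwise distances gives $2(a+b+c)$.

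Next I would substitute this identity into the definition of $SW_3(T)$ and sum over all $3$-subsets:
$$
SW_3(T)=\sum_{\substack{S\subseteq V(T)\\ |S|=3}}d(S)=\frac{1}{2}\sum_{\{x,y,z\}}\bigl(d(x,y)+d(y,z)+d(x,z)\bigr).
$$
The key combinatorial observation is that, on the right-hand side, each unordered pair $\{u,v\}$ contributes its distance once for every choice of a third vertex; since there are $n-2$ remaining vertices, each term $d(u,v)$ appears with multiplicity $n-2$. Regrouping the sum by pairs therefore gives
$$
SW_3(T)=\frac{n-2}{2}\sum_{\{u,v\}\subseteq V(T)}d(u,v)=\frac{n-2}{2}\,W(T),
$$
which is the claim.

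The only genuinely non-routine step is verifying the three-point identity; everything afterward is bookkeeping. Establishing the identity amounts to confirming that the three geodesics among $x,y,z$ share a common vertex and overlap pairwise only along a single segment to that vertex, so that the edges of the Steiner tree are counted exactly once. This follows from the acyclicity of trees: two geodesics issuing from a common endpoint coincide until they branch, and in a tree the branch point is uniquely determined, yielding a well-defined median. I expect this to be the main obstacle only in the sense of requiring a careful description of the Steiner tree's $Y$-shape; once the median's existence and the edge-count $a+b+c$ are in place, the double-counting of pairs delivers the formula at once. It is worth remarking that trees are the simplest modular graphs, so this argument is a prototype for the more general statement the paper develops.
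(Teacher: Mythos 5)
Your proof is correct and takes essentially the same route as the paper: the paper's proof of the generalization to modular graphs (Theorem \ref{thm:sw3modular}) rests on the very same identity $d(\{x,y,z\})=\tfrac{1}{2}\left(d(x,y)+d(y,z)+d(x,z)\right)$, obtained there from a vertex $x\in I_2(S)$ via Theorem \ref{thm:2intersection}, followed by the identical double count that each pair of vertices lies in exactly $n-2$ triples. Your tree-specific realization of that vertex as the unique branch point of the three geodesics is simply the concrete median in a tree, so the argument does not differ in substance.
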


The paper is organised as follows. In Section \ref{section:3SWmodular} we generalise Theorem \ref{thm:sw3trees} to modular graphs, and obtain the inequality holding for general graphs. In Section \ref{section:S3Wproduct} the Steiner 3-Wiener index of Cartesian product of modular graphs is expressed in terms of the Steiner 3-Wiener index of the factor graphs. In Section \ref{section:S3WFibonacci} we derive formulae for the Steiner 3-Wiener index of Fibonacci and Lucas Cubes. In Section \ref{section:3SWblock} we derive a formula for the Steiner 3-Wiener index of block graphs. In the final section we conclude with an open problem on the number of non-modular triplets in a graph $G$ and its relation with the Steiner 3-Wiener index of $G$.

\section{Steiner 3-Wiener index of modular graphs}
\label{section:3SWmodular}

The {\em interval} $I(u,v)$ between two vertices $u$ and $v$ consists of all vertices that are on shortest paths joining $u$ and $v$. A graph $G$ is a {\em modular} \cite{bandelt1987modular} if for every three vertices $x,y,z$ there exists a vertex $w$ that lies on a shortest path between every two vertices of $x, y, z$, i. e. 
\begin{equation}
\label{eq:modular}
|I(x,y)\cap I(x,z) \cap I(y,z)| \geq 1.
\end{equation}
Their name comes from the fact that a finite lattice is a modular lattice if and only if its Hasse diagram is a modular graph. It is easy to see that a modular graph is a bipartite graph. Examples of modular graphs are trees, hypercubes, grids, complete bipartite graphs, etc. The simplest example of non-modular graphs are cycles on $n$ vertices, for $n\neq 4$, and complete graphs.  A graph $G$ is called a {\em median graph} if equality holds in (\ref{eq:modular}) . Hence in a median graph every triple of 
vertices $u,v,w$ has a unique {\em median} - a vertex that simultaneously lies on a shortest $u,v$-path,
a shortest $u,w$-path, and a shortest $v,w$-path. 

For $S \subseteq V(G)$, the {\em 2-intersection interval} of $S$ is the intersection of all intervals between pairs of vertices from $S$: $$\displaystyle I_2(S)=\bigcap_{\substack{a,b \in S\\ a\neq b}}I(a,b).$$ Hence modular graphs are those graphs for which the 2-intersection interval of every triple of vertices is non-empty. The following result is from \cite{kubicka1998steiner}.

\begin{theorem}\label{thm:2intersection}
Let $S = \{u_1, u_2,\ldots,u_n\}$ be a set of $n > 2$ vertices of a graph $G$. If the 2-intersection interval of $S$ is nonempty and $x \in I_2(S)$, then $$d(S) = \sum_{i=1}^{n}d(u_i, x).$$ 
\end{theorem}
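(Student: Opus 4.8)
The plan is to prove Theorem~\ref{thm:2intersection}, which asserts that if $x \in I_2(S)$ then $d(S) = \sum_{i=1}^n d(u_i, x)$. The key structural observation is that $x$ being in the $2$-intersection interval means $x$ lies simultaneously on a shortest path between \emph{every} pair of vertices of $S$. I would prove the two inequalities separately. For the upper bound $d(S) \leq \sum_{i=1}^n d(u_i, x)$, I would simply exhibit a connected subgraph containing $S$: take the union of shortest $u_i$--$x$ paths, one for each $i$. This is a connected subgraph (all paths share the vertex $x$) containing every vertex of $S$ (and $x$), so its size is at most $\sum_{i=1}^n d(u_i,x)$, and since $d(S)$ is the minimum over all such subgraphs, the inequality follows.

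The main work is the lower bound $d(S) \geq \sum_{i=1}^n d(u_i, x)$. Let $H$ be any Steiner tree for $S$, so $H$ is a subtree of $G$ with $d(S)$ edges containing all $u_i$. The idea is to show that $x$ itself, or a suitable vertex, forces the tree to be at least this large by using the interval condition pairwise. The cleanest route I would take is to argue that the distances within $H$ cannot be shorter than the geodetic distances in $G$ together with the $I_2(S)$ property. Concretely, I would use the hypothesis $x \in I(u_i,u_j)$ for all pairs, which gives $d(u_i,u_j) = d(u_i,x) + d(x,u_j)$. This additivity is the crucial consequence of $x$ lying on a shortest path between each pair.

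The technical heart will be translating these pairwise additivity relations into a lower bound on the size of an arbitrary Steiner tree $T$ for $S$. I would proceed by induction on $n$, or alternatively by a direct weighting argument. For the induction, after handling the base case $n=2$ (where the claim reduces to $d(u_1,u_2) = d(u_1,x)+d(x,u_2)$, exactly the statement that $x \in I(u_1,u_2)$), I would remove a leaf $u_n$ of the Steiner tree and argue that the remaining tree is a Steiner tree for a smaller set whose $2$-intersection interval still contains $x$. The subtle point is that a Steiner tree for $S$ need not contain $x$, and a leaf of the Steiner tree need not be one of the terminal vertices $u_i$; so I would first reduce to Steiner trees all of whose leaves are terminals, then carefully account for the path from the branch vertex to $u_n$ and show that its contribution is at least $d(u_n,x)$ minus the overlap already counted, using that $x \in I(u_n, u_j)$ for every other $j$.

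The hard part will be the bookkeeping in the lower bound: ensuring that when the Steiner tree branches, the additivity relations $d(u_i,u_j)=d(u_i,x)+d(x,u_j)$ can be assembled without double-counting so that the total edge count is bounded below by $\sum_i d(u_i,x)$. I expect the inductive step to hinge on showing that the portion of the Steiner tree ``closest'' to $x$ behaves like a shortest path, and I would verify that removing a pendant terminal and its private path strictly decreases both sides of the desired inequality by the same amount $d(u_n,x)$, closing the induction. An alternative to induction, which may be cleaner, is to invoke that in a tree any median-type vertex realizes all distances additively and to identify the image of $x$ under a distance-preserving retraction onto the Steiner tree.
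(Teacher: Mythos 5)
A preliminary remark: the paper itself contains no proof of Theorem~\ref{thm:2intersection}; it is quoted from \cite{kubicka1998steiner}, so your attempt must be measured against the standard argument there. Your upper bound is correct and is the standard one: the union of shortest $u_i$--$x$ paths is connected and contains $S$, so $d(S)\leq\sum_{i=1}^n d(u_i,x)$. The lower bound, however, has a genuine gap exactly where you flag it. Your induction removes a leaf terminal $u_n$ together with its private path $P$ (to the nearest branch vertex or terminal $b$) and wants ``both sides to decrease by $d(u_n,x)$,'' i.e.\ $|E(P)|\geq d(u_n,x)$. That pointwise claim is not justified and cannot be assumed: nothing prevents a Steiner tree from attaching $u_n$ by a short private path, $|E(P)|=\ell<d(u_n,x)$, in which case the induction hypothesis only yields $|E(T)|\geq\ell+\sum_{i<n}d(u_i,x)$, leaving a deficit of $d(u_n,x)-\ell$. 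The theorem is still true in such a configuration, but only because a short private path forces the \emph{remaining} tree $T'$ to be longer than the Steiner distance of $S\setminus\{u_n\}$: one needs $d(b,u_j)\geq d(u_n,u_j)-\ell = d(u_n,x)+d(x,u_j)-\ell$ for each $j$, and a half-perimeter-type bound on trees to convert these pairwise estimates into a bound on $|E(T')|$. The naive ``subtract $d(u_n,x)$'' bookkeeping cannot see this compensation, so the inductive step as planned fails. Your fallback idea of a ``distance-preserving retraction onto the Steiner tree'' is also unavailable: a Steiner tree need not be an isometric subgraph, and no such retraction exists in general.

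The clean repair, which is the standard proof and avoids induction entirely, is the doubled-tree (Euler tour) argument. Let $T$ be a Steiner tree for $S$ with $d(S)$ edges. Doubling every edge of $T$ gives a closed walk traversing each edge of $T$ twice and visiting the terminals in some cyclic order $v_1,v_2,\ldots,v_n,v_1$ (a relabelling of $u_1,\ldots,u_n$). Hence
\begin{equation*}
2\,d(S) \;=\; 2|E(T)| \;\geq\; \sum_{i=1}^{n} d(v_i,v_{i+1}) \;=\; \sum_{i=1}^{n}\bigl(d(v_i,x)+d(v_{i+1},x)\bigr) \;=\; 2\sum_{i=1}^{n} d(u_i,x),
\end{equation*}
where indices are modulo $n$ and the middle equality is exactly your additivity observation $d(v_i,v_{i+1})=d(v_i,x)+d(x,v_{i+1})$, valid for \emph{consecutive} pairs because $x\in I_2(S)$ lies in every interval. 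Note how this uses only $n$ of the $\binom{n}{2}$ pairwise relations, arranged cyclically so that each $d(u_i,x)$ is counted exactly twice --- this is precisely the double-counting discipline your proposal searches for but does not pin down. Combined with your upper bound, this gives the equality, so your overall architecture (two inequalities, additivity through $x$) is right, but the lower bound needs this global walk argument rather than leaf-by-leaf induction.
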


Let $G$ be a connected graph. A triplet of vertices $x,y,z \in V(G)$ is called a {\em modular triplet} if $I(x,y) \cap I(x,z) \cap I (y,z) \neq \emptyset$.  
Next we provide the connection between 3-Steiner Wiener index and Wiener index of a graph.

\begin{theorem}
\label{thm:sw3modular}
Let $G$ be a graph on $n$ vertices. Then,
$$
SW_3(G) \geq \frac{n-2}{2} W(G),
$$
with the equality if and only if $G$ is a modular graph.
\end{theorem}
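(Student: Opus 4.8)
The plan is to reduce the global inequality to a per-triplet inequality and then sum. The central claim is that for every triple $S=\{x,y,z\}$ of distinct vertices,
$$d(S)\geq \tfrac{1}{2}\big(d(x,y)+d(x,z)+d(y,z)\big),$$
with equality precisely when $S$ is a modular triplet. Granting this, I would finish by a double-counting argument: summing over all $\binom{n}{3}$ triples, each pairwise distance $d(u,v)$ is counted once for every choice of the third vertex, hence $n-2$ times, so $\sum_{|S|=3}\big(d(x,y)+d(x,z)+d(y,z)\big)=(n-2)\,W(G)$. This gives $SW_3(G)\geq\frac{n-2}{2}W(G)$ at once, and since every summand satisfies the inequality, equality holds globally if and only if it holds for every triple, that is, if and only if every triple is modular, i.e. $G$ is modular.

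For the per-triplet lower bound I would exploit the tree structure of a Steiner tree $H$ realizing $d(S)$. With only three terminals, $H$ has a unique branch vertex $c$ (possibly coinciding with one of the terminals), and $H$ is the union of three internally disjoint paths from $c$ to $x$, $y$, $z$; thus $d(S)=|E(H)|=d_H(x,c)+d_H(y,c)+d_H(z,c)$. Since $H$ is a subgraph of $G$, we have $d_H(x,y)=d_H(x,c)+d_H(c,y)\geq d(x,y)$ and the two analogous relations. Adding the three yields $2\,d(S)\geq d(x,y)+d(x,z)+d(y,z)$, the desired bound.

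For the equality characterization I would argue both directions. If $S$ is modular with median $m\in I_2(S)$, then Theorem~\ref{thm:2intersection} gives $d(S)=d(x,m)+d(y,m)+d(z,m)$, and since $m$ lies on a geodesic for each pair we have $d(x,m)+d(m,y)=d(x,y)$ together with its two analogues; summing them shows $d(S)=\tfrac{1}{2}\big(d(x,y)+d(x,z)+d(y,z)\big)$. Conversely, if equality holds then each subgraph inequality above is tight, so $d(x,y)=d_H(x,c)+d_H(c,y)$; combined with $d(x,c)\leq d_H(x,c)$, $d(c,y)\leq d_H(c,y)$ and the triangle inequality $d(x,y)\leq d(x,c)+d(c,y)$, this squeezes $d(x,c)+d(c,y)=d(x,y)$, i.e. $c\in I(x,y)$, and likewise $c\in I(x,z)\cap I(y,z)$; hence $S$ is modular. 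The main obstacle is exactly this converse: one must extract from a single scalar equality the three separate betweenness conditions placing the branch vertex on a geodesic for each pair, and confirm that the witnessing median is the branch vertex of the Steiner tree itself rather than some a priori unrelated vertex. The squeeze via the subgraph-distance inequalities and the triangle inequality is what makes this step go through.
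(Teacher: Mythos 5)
Your proposal is correct, and in the key step it is actually more complete than the paper's own proof. The shared skeleton is the same: reduce to the per-triplet statement $d(S)\geq \tfrac12\left(d(x,y)+d(x,z)+d(y,z)\right)$ with equality exactly for modular triplets, then double-count (each pair lies in $n-2$ triples) to get $\frac{n-2}{2}W(G)$; your forward direction for modular triplets, via Theorem \ref{thm:2intersection} and the betweenness relations $d(x,m)+d(m,y)=d(x,y)$, is essentially identical to the paper's two-case argument (your formulation handles the case $m\in S$ uniformly, since then one leg has length zero). Where you genuinely diverge is the inequality and its strictness: the paper simply asserts that a non-modular triplet satisfies $d(\{a,b,c\})>\tfrac12\left(d(a,b)+d(a,c)+d(b,c)\right)$ with no justification, whereas you derive the bound from the spider decomposition of a minimum $3$-terminal Steiner tree $H$ at its branch vertex $c$ (so $d(S)=d_H(x,c)+d_H(y,c)+d_H(z,c)$, and $d_H$ of each pair dominates $d_G$), and then extract the converse by observing that equality forces each of the three subgraph inequalities to be tight individually (the sum of three nonnegative deficits vanishes), after which the squeeze $d(x,y)\leq d(x,c)+d(c,y)\leq d_H(x,c)+d_H(c,y)=d(x,y)$ places $c$ in all three intervals, exhibiting $c$ itself as the median. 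This buys a self-contained proof of exactly the step the paper leaves implicit. The only gloss worth adding: to justify that $H$ decomposes into three internally disjoint legs meeting at a single vertex $c$ (possibly a terminal), note that in a minimum Steiner tree every leaf is a terminal --- otherwise pruning a non-terminal leaf gives a smaller connected subgraph containing $S$ --- so $H$ has at most three leaves and the claimed structure follows; this is a one-line remark, not a gap in substance.
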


\begin{proof}
Let $S=\{a,b,c\} \subseteq V(G)$, $|S|=3,$ and let $G$ be a modular graph. Then there exist $x \in I_2 (S)$. By Theorem \ref{thm:2intersection} it follows that $d(S) = d(a, x)+d(b, x)+d(c,x)$. There are two possibilities: $x \in S$ or $x \notin S$.\\
{\bf Case 1.} $x \in S$\\
Without loss of generality, let $x=b$. Hence $d(a,c)=d(a,b)+d(b,c)$ and therefore $d(S)=d(a,c)=\frac 12 (d(a,b)+d(b,c)+d(a,c))$.\\
{\bf Case 2.} $x \notin S$\\
It follows that $d(a,x)+d(x,b)=d(a,b)$ and $d(b,x)+d(x,c)=d(b,c)$ and $d(a,x)+d(x,c)=d(a,c)$. Therefore $d(S)=d(a, x)+d(b, x)+d(c,x)=\frac 12 (d(a,b)+d(b,c)+d(a,c))$.
Each pair of vertices in a graph on $n$ vertices belongs to $n-2$ different triples of vertices, hence it follows.
\begin{align*}
SW_3(G) &= \sum_{\substack{S\subseteq V(G)\\ |S|=3}}d(S)\\
&=  \sum_{\substack{a,b,c \in V(G)\\ |\{a,b,c\}|=3}} \frac 12 \left(d(a,b)+d(b,c)+d(a,c)\right)\\
&= \frac 12 \sum_{a,b \in V(G) }d(a,b) \, (n-2)\\ \label{line 3}
&=  \frac{n-2}{2} W(G).
\end{align*}
For a non modular triplet $a,b,c \in V(G)$ we always have $d(\{a,b,c\}) > \frac 12 (d(a,b) + d(a,c) + d(b,c))$, hence for a non modular graph
\begin{align*}
 \frac{n-2}{2} W(G) =\sum_{\substack{a,b,c \in V(G)\\ |\{a,b,c\}|=3}} \frac 12 \left(d(a,b)+d(b,c)+d(a,c)\right)< \sum_{\substack{S\subseteq V(G)\\ |S|=3}}d(S) = SW_3(G).
\end{align*}
\end{proof}

Since trees are modular graphs Theorem \ref{thm:sw3modular} generalises Theorem \ref{thm:sw3trees}. Moreover the inequality for general graphs coincides with the inequality \ref{formula:mukmu} for $k=3$,  proved for $k$ by Dankelmann, Oellermann, and Swart in \cite{dankelmann1996average}. 

\section{Steiner 3-Wiener index of Cartesian products of modular graphs}
\label{section:S3Wproduct}

The {\em Cartesian product} $G \,\square\, H$ of two graphs $G$ and $H$ is the graph with vertex set $V(G)\times V(H)$ and $(a,x)(b,y)\in E(G\,\square\, H)$ whenever either $ab\in E(G)$ and $x=y$, or $a=b$ and $xy\in E(H)$. The following result has been obtained by Graovac and Pisanski \cite{graovac1991} and Yeh and Gutman \cite{YehGutman}.

\begin{theorem}
\label{lem:wienerproduct}
Let G and H be connected graphs. Then
$$W(G\square H)=|V(G)|^2 W(H)+|V(H)|^2 W(G).$$
\end{theorem}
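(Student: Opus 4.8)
The plan is to derive everything from the fundamental distance formula for Cartesian products, namely that for any two vertices $(a,x),(b,y)\in V(G\square H)$ one has
$$
d_{G\square H}((a,x),(b,y)) = d_G(a,b) + d_H(x,y).
$$
To prove this formula I would argue the two inequalities separately. For the upper bound, concatenate a shortest path from $a$ to $b$ in the copy of $G$ obtained by fixing the second coordinate at $x$ with a shortest path from $x$ to $y$ in the copy of $H$ obtained by fixing the first coordinate at $b$; this produces a walk from $(a,x)$ to $(b,y)$ of length $d_G(a,b)+d_H(x,y)$. For the lower bound, observe that by the definition of $\square$ every edge of $G\square H$ changes exactly one coordinate by one edge of the corresponding factor. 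Projecting any path from $(a,x)$ to $(b,y)$ onto its first (resp.\ second) coordinate therefore yields a walk from $a$ to $b$ in $G$ (resp.\ from $x$ to $y$ in $H$), so the path must use at least $d_G(a,b)$ edges that move the first coordinate and at least $d_H(x,y)$ edges that move the second; since these two edge sets are disjoint, the path has length at least $d_G(a,b)+d_H(x,y)$.

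With the distance formula in hand, substitute it into the definition of the Wiener index. Writing $n=|V(G)|$ and $m=|V(H)|$ and passing from unordered to ordered pairs (which introduces a factor $\tfrac12$, the diagonal terms contributing $0$), I obtain
$$
W(G\square H) = \frac12\sum_{a,b\in V(G)}\sum_{x,y\in V(H)} \bigl(d_G(a,b)+d_H(x,y)\bigr).
$$
The double sum splits into two parts. In the first part $d_G(a,b)$ is independent of $x$ and $y$, so summing over the $m^2$ ordered pairs $(x,y)$ contributes a factor $m^2$, leaving $\tfrac12 m^2\sum_{a,b}d_G(a,b)=m^2 W(G)$, where I have used $\sum_{a,b}d_G(a,b)=2W(G)$. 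Symmetrically, the second part gives $n^2 W(H)$. Adding the two yields $W(G\square H)=|V(H)|^2 W(G)+|V(G)|^2 W(H)$, which is the claimed identity.

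The only nontrivial step is the lower bound in the distance formula, where one must verify that the edges moving the first coordinate and those moving the second are genuinely disjoint and individually bounded below by the factor distances; the counting in the second paragraph is then purely routine bookkeeping. It is worth noting that this argument uses no modularity whatsoever and holds for arbitrary connected $G$ and $H$, which is consistent with the theorem being stated in full generality before being specialised to the modular setting in later sections.
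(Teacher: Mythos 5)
Your proof is correct. The paper itself offers no proof of this theorem---it is quoted as a known result of Graovac--Pisanski and Yeh--Gutman---but your argument, establishing the distance formula $d_{G\square H}((a,x),(b,y))=d_G(a,b)+d_H(x,y)$ by the concatenation upper bound and the coordinate-projection lower bound (correctly noting that the two sets of coordinate-moving edges are disjoint and that projections give walks, not paths) and then splitting the ordered-pair sum with the factor $\tfrac12$, is exactly the standard proof found in the cited sources, and every step checks out.
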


For $k \geq 3$ the situation is much more complicated.  In \cite{MaoWangGutman}, Mao Wang and Gutman obtained the following bounds.

\begin{theorem}\label{th4-16}
Let $G$ be a connected graph with $n$ vertices, and let $H$ be a
connected graph with $m$ vertices. Let $k$ be an integer with $2\leq
k\leq nm$. Then
\begin{eqnarray*}
&&\sum_{x=2}^{k}\binom{m}{r_1}\binom{m}{r_2}\cdots \binom{m}{r_{x}}SW_{x}(G)
+ \sum_{y=2}^{k}\binom{n}{s_1}\binom{n}{s_2}\cdots
\binom{n}{s_{y}}SW_{y}(G) \\[3mm]
&\leq &SW_k(G\Box H)
\leq \frac{k}{2}\left[\sum_{x=2}^{k}\binom{m}{r_1}\binom{m}{r_2}
\cdots \binom{m}{r_{x}}SW_{x}(G) +
\sum_{x=2}^{k}\binom{n}{s_1}\binom{n}{s_2}\cdots \binom{n}{s_{y}}SW_{y}(G)\right]
\end{eqnarray*}
where $\sum_{i=1}^{x}r_{i}=k$ and $r_i\geq 1$, and
$\sum_{i=1}^{y}s_{i}=k$ and $s_i\geq 1$.
\end{theorem}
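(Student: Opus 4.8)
The plan is to deduce both global bounds from a single per-set inequality and then sum over all $k$-subsets. Write $\pi_G$ and $\pi_H$ for the projections of $V(G\square H)=V(G)\times V(H)$ onto the two factors, and recall that distance in a Cartesian product is additive, $d_{G\square H}(u,v)=d_G(\pi_G u,\pi_G v)+d_H(\pi_H u,\pi_H v)$. I would first prove that for every $S\subseteq V(G\square H)$ with $|S|=k$,
$$ d_G(\pi_G S)+d_H(\pi_H S)\ \le\ d_{G\square H}(S)\ \le\ \tfrac{k}{2}\bigl(d_G(\pi_G S)+d_H(\pi_H S)\bigr), $$
where $\pi_G S$ and $\pi_H S$ denote the projected vertex sets. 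Summing over all $k$-subsets and collecting terms by the size of the projected set then reproduces the stated bounds, with the factor $\tfrac{k}{2}$ carried over verbatim from the right-hand inequality.

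For the lower per-set bound I would take a Steiner tree $T$ of $S$ in $G\square H$ and split its edges into those that change the $G$-coordinate and those that change the $H$-coordinate. Contracting all edges of the second type identifies vertices sharing a $G$-coordinate and turns $T$ into a connected subgraph of $G$ whose vertex set contains $\pi_G S$; hence the number of edges of the first type is at least $d_G(\pi_G S)$, and symmetrically the number of edges of the second type is at least $d_H(\pi_H S)$. Adding the two counts gives $d_{G\square H}(S)\ge d_G(\pi_G S)+d_H(\pi_H S)$.

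The upper per-set bound is the crux, and is where the factor $\tfrac{k}{2}$ must be extracted. For any vertex $v=(a_0,b_0)$ the union of shortest paths from the vertices of $S$ to $v$ is connected and contains $S$, so $d_{G\square H}(S)\le\sum_{s\in S}d_{G\square H}(s,v)=\sum_{s\in S}d_G(\pi_G s,a_0)+\sum_{s\in S}d_H(\pi_H s,b_0)$. I would choose $a_0$ to be a weighted $1$-median of a Steiner tree $T_G$ of $\pi_G S$, each vertex $a$ carrying weight $w_a=|\{s\in S:\pi_G s=a\}|$ with $\sum_a w_a=k$. Writing $\sum_{s}d_G(\pi_G s,a_0)\le\sum_a w_a\,d_{T_G}(a,a_0)$ as a sum over the edges of $T_G$ of the weights lying on the far side of the median, and using that median optimality forces every such far-side weight to be at most $k/2$, bounds this sum by $\tfrac{k}{2}\,d_G(\pi_G S)$. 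The same choice of $b_0$ in a Steiner tree $T_H$ of $\pi_H S$ controls the second sum, yielding the right-hand inequality.

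Finally, to pass from the per-set inequalities to the theorem I would regroup $\sum_{|S|=k}d_G(\pi_G S)$ according to the projected set $A=\pi_G S$: for a fixed $A$ with $|A|=x$ the $k$-subsets $S$ with $\pi_G S=A$ are obtained by distributing $k$ elements among the $x$ fibres $\{a_i\}\times V(H)$, each of size $m=|V(H)|$, so their number is $\sum_{r_1+\cdots+r_x=k,\,r_i\ge1}\prod_{i=1}^{x}\binom{m}{r_i}$, while $\sum_{|A|=x}d_G(A)=SW_x(G)$; this reproduces exactly the composition-indexed coefficients of the first summand. The analogous computation for the $H$-projection, where the fibres now have $n=|V(G)|$ elements, produces the second summand (the factors $\binom{n}{s_i}$ and $SW_y(H)$). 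I expect the genuine difficulty to lie entirely in the upper bound — specifically in justifying the factor $\tfrac{k}{2}$ through the tree-median argument — together with the bookkeeping of the composition sums, namely that each product $\prod_i\binom{m}{r_i}$ is summed over all compositions of $k$ into $x$ positive parts, with $\binom{m}{r_i}=0$ automatically discarding the impossible cases $r_i>m$.
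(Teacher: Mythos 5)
Your proof is correct, but note that there is nothing in this paper to compare it against: Theorem~\ref{th4-16} is quoted from the reference of Mao, Wang and Gutman and the paper supplies no proof of it (it is background material; the paper only proves its own product formula for \emph{modular} graphs via Theorem~\ref{thm:sw3modular}). Judged on its own merits, your argument is a sound reconstruction of the cited result. The per-set sandwich $d_G(\pi_G S)+d_H(\pi_H S)\le d_{G\Box H}(S)\le \tfrac{k}{2}\bigl(d_G(\pi_G S)+d_H(\pi_H S)\bigr)$ is the right engine: your lower bound is clean because $H$-type edges preserve the $G$-coordinate, so contracting them leaves a connected subgraph of $G$ containing $\pi_G S$ with at most as many edges as there are $G$-type edges in the Steiner tree, and symmetrically; your upper bound via a star from $v=(a_0,b_0)$ with $a_0,b_0$ chosen as weighted $1$-medians of Steiner trees of the projections is valid, since the median property (no edge of the tree has far-side weight exceeding $k/2$, else moving toward the heavy side would strictly decrease the weighted distance sum) gives $\sum_a w_a\,d_{T_G}(a,a_0)=\sum_{e\in E(T_G)}W_{\mathrm{far}}(e)\le \tfrac{k}{2}\,d_G(\pi_G S)$, and the two coordinates decouple because product distances are additive, so one vertex $v$ serves both factors simultaneously. (An equivalent route, averaging $\sum_{s\in S}d(s,v)$ over $v\in S$ and bounding pairwise tree distances by $\sum_e n_1(e)n_2(e)\le\tfrac{k^2}{4}|E(T)|$, yields the same factor $\tfrac{k}{2}$.) Your fibre count is also exactly right: for fixed $A=\pi_G S$ with $|A|=x$ the number of admissible $S$ is $\sum_{r_1+\cdots+r_x=k,\ r_i\ge 1}\prod_{i=1}^{x}\binom{m}{r_i}$, which depends only on $x$, so the regrouping produces $SW_x(G)$ with precisely these composition-indexed coefficients. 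One last point in your favour: the statement as printed in the paper is garbled --- the second summand should involve $SW_y(H)$ rather than $SW_y(G)$, and the sums over the compositions $(r_1,\ldots,r_x)$ and $(s_1,\ldots,s_y)$ are left implicit --- and your proof silently supplies the corrected form, which is the version your argument actually establishes.
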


Since interval of Cartesian product $G\square H$ equals the Cartesian product of the corresponding intervals in $G$ and $H$, it follows that the Cartesian product of two modular graphs is a modular graph.

 \begin{theorem} Let $G$ and $H$ be modular graphs. Then,
\begin{equation*}
SW_3(G\square H) = (|V(G)|\cdot|V(H)|-2) \left( \frac{|V(G)|^2 }{|V(H)|-2} SW_3(H)+\frac{|V(H)|^2}{|V(G)|-2} SW_3(G) \right)
 \end{equation*}
\end{theorem}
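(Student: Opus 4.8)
The plan is to chain together the three ingredients already available: the modularity characterisation $SW_3(G)=\tfrac{n-2}{2}W(G)$ of Theorem~\ref{thm:sw3modular}, the product formula for the Wiener index in Theorem~\ref{lem:wienerproduct}, and the observation made just above that the Cartesian product of two modular graphs is again modular. Write $n=|V(G)|$ and $m=|V(H)|$, so that $G\square H$ has $nm$ vertices. Since intervals in a Cartesian product factor as products of the intervals in the factors, $G\square H$ inherits modularity, which is precisely the hypothesis needed to apply Theorem~\ref{thm:sw3modular} to the product itself.

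First I would apply Theorem~\ref{thm:sw3modular} to $G\square H$, which gives
\[
SW_3(G\square H)=\frac{nm-2}{2}\,W(G\square H).
\]
Next I substitute the Wiener index of the product from Theorem~\ref{lem:wienerproduct}, namely $W(G\square H)=n^2W(H)+m^2W(G)$, to obtain
\[
SW_3(G\square H)=\frac{nm-2}{2}\bigl(n^2W(H)+m^2W(G)\bigr).
\]

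The remaining step is to eliminate the ordinary Wiener indices of the factors in favour of their Steiner $3$-Wiener indices. Because $G$ and $H$ are themselves modular, Theorem~\ref{thm:sw3modular} applies to each and can be inverted: $W(G)=\tfrac{2}{n-2}SW_3(G)$ and $W(H)=\tfrac{2}{m-2}SW_3(H)$. Substituting these, the factors of $2$ cancel against the $\tfrac12$, and I arrive at
\[
SW_3(G\square H)=(nm-2)\left(\frac{n^2}{m-2}SW_3(H)+\frac{m^2}{n-2}SW_3(G)\right),
\]
which is the claimed identity.

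There is no genuinely hard step here; once the product is known to be modular, the whole argument is a substitution. The only point requiring a word of care is that both the inversion $W=\tfrac{2}{\,\cdot-2\,}SW_3$ and the final formula divide by $n-2$ and $m-2$. This is harmless precisely because $SW_3$ is only defined for graphs on at least three vertices, so implicitly $n,m\ge 3$ and both denominators are at least $1$; I would state this explicitly so that the displayed formula is well defined.
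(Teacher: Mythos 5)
Your proof is correct and follows exactly the same route as the paper: apply Theorem~\ref{thm:sw3modular} to the modular product $G\,\square\, H$, substitute the Wiener index product formula of Theorem~\ref{lem:wienerproduct}, and invert $SW_3=\frac{n-2}{2}W$ for each factor. Your explicit remark that the inversion requires $n,m\geq 3$ is a sensible addition the paper leaves implicit.
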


\begin{proof}
Since $|G\square H|=|V(G)|\cdot|V(H)|$ by Theorem \ref{thm:sw3modular} and Theorem \ref{lem:wienerproduct} it follows that
\begin{align*}
   SW_3(G\square H) &= \frac{|V(G)|\cdot|V(H)|-2}{2}  W(G\square H) \\
   &= \frac{|V(G)|\cdot|V(H)|-2}{2} \left( |V(G)|^2 W(H)+|V(H)|^2 W(G) \right)\\
   &= \frac{|V(G)|\cdot|V(H)|-2}{2} \left( |V(G)|^2 \frac{2}{|V(H)|-2} SW_3(H)+|V(H)|^2 \frac{2}{|V(G)|-2} SW_3(G) \right)\\
   &= (|V(G)|\cdot|V(H)|-2) \left( \frac{|V(G)|^2 }{|V(H)|-2} SW_3(H)+\frac{|V(H)|^2}{|V(G)|-2} SW_3(G) \right).
\end{align*}
\end{proof}

Note that the simplest way to calculate Steiner 3-Wiener index of Cartesian product of modular graphs is by applying the equality $$SW_3(G\square H) =\frac{|V(G)|\cdot|V(H)|-2}{2} \left( |V(G)|^2 W(H)+|V(H)|^2 W(G)\right).$$

\section{Steiner 3-Wiener index of Fibonacci and Lucas Cubes}
\label{section:S3WFibonacci}

Fibonacci cubes were introduced as a model for interconnection networks 
in \cite{hsu-93a,hsu-93b}.
Lucas cubes were introduced in \cite{muci-01} for similar reason. They have been studied extensively afterwards, see survey \cite{klavzar2013survey}.

The Fibonacci numbers $F_n$ are defined as
$F_1 = F_2 = 1$, $F_{n} = F_{n-1} + F_{n-2}$ for $n\geq 3$, 
and the Lucas numbers as
$L_0 = 2$, $L_1 = 1$, $L_{n} = L_{n-1} + L_{n-2}$ for $n\geq 2$. Lucas numbers are related to Fibonacci number by the identities $L_{n}=F_{n-1}+F_{n+1}=F_{n}+2F_{n-1}=F_{n+2}-F_{n-2}$.

The vertex set of the {\em $n$-cube} $Q_n$ consists of all binary 
strings of length $n$, two vertices being adjacent if the 
corresponding strings differ in precisely one place. 
A {\em Fibonacci string} of length $n$ is a binary string
$b_1b_2\ldots b_n$ with $b_ib_{i+1}=0$ for $1\leq i<n$, that is, 
a binary string without two consecutive ones. The {\em Fibonacci cube} 
$\Gamma_n$ ($n\geq 1$) is the subgraph of $Q_n$ induced by 
the Fibonacci strings of length $n$. For convenience we also set 
$\Gamma_0 = K_1$. Call a Fibonacci string $b_1b_2\ldots b_n$ a 
{\em Lucas string} if $b_1b_n = 0$. Then the {\em Lucas cube} 
$\Lambda_n$ ($n\geq 2$) is the subgraph of $Q_n$ induced by 
the Lucas strings of length $n$. We also set $\Lambda_0 = K_1$. For the Fibonacci cubes we have $|V(\Gamma_n)|=F_{n+2}$, see \cite{hsu-93b}, and for the Lucas cubes we have $|V(\Lambda_n)| = L_{n}$, see \cite{muci-01}. 

The Fibonacci cube $\Gamma_1$ is isomorphic to the path graph $P_2$ on two vertices,
$\Gamma_2$ to the path graph $P_3$, 
$\Gamma_3$ to the banner graph - 4-cycle and path graph $P_2$  joined in a common vertex.
The Lucas cube $\Lambda_2$ is isomorphic to the path graph $P_3$, $\Lambda_3$ to the star on four vertices $S_4$, and $\Lambda_4$ to two 4-cycles joined in a common vertex.
The Fibonacci cubes $\Gamma_4$ and $\Gamma_5$ and the Lucas cube 
$\Lambda_5$ and their corresponding binary labels of vertices are shown in Figure \ref{slika-1}. 

\begin{figure}[htb]
\begin{center}
\unitlength=1.6mm
\linethickness{0.4pt}
\begin{picture}(69.00,74.00)
\put(2.00,48.00){\circle*{1.50}}
\put(12.00,48.00){\circle*{1.50}}
\put(22.00,48.00){\circle*{1.50}}
\put(2.00,58.00){\circle*{1.50}}
\put(12.00,58.00){\circle*{1.50}}
\put(22.00,58.00){\circle*{1.50}}
\put(2.00,68.00){\circle*{1.50}}
\put(12.00,68.00){\circle*{1.50}}
\emline{2.00}{68.00}{1}{2.00}{48.00}{2}
\emline{2.00}{48.00}{3}{22.00}{48.00}{4}
\emline{22.00}{48.00}{5}{22.00}{58.00}{6}
\emline{22.00}{58.00}{7}{2.00}{58.00}{8}
\emline{2.00}{68.00}{9}{12.00}{68.00}{10}
\emline{12.00}{68.00}{11}{12.00}{48.00}{12}
\put(42.00,48.00){\circle*{1.50}}
\put(52.00,48.00){\circle*{1.50}}
\put(62.00,48.00){\circle*{1.50}}
\put(42.00,58.00){\circle*{1.50}}
\put(52.00,58.00){\circle*{1.50}}
\put(62.00,58.00){\circle*{1.50}}
\put(42.00,68.00){\circle*{1.50}}
\put(52.00,68.00){\circle*{1.50}}
\emline{42.00}{68.00}{13}{42.00}{48.00}{14}
\emline{42.00}{48.00}{15}{62.00}{48.00}{16}
\emline{62.00}{48.00}{17}{62.00}{58.00}{18}
\emline{62.00}{58.00}{19}{42.00}{58.00}{20}
\emline{42.00}{68.00}{21}{52.00}{68.00}{22}
\emline{52.00}{68.00}{23}{52.00}{48.00}{24}
\emline{42.00}{58.00}{25}{47.00}{63.00}{26}
\emline{47.00}{63.00}{27}{47.00}{73.00}{28}
\emline{47.00}{73.00}{29}{57.00}{73.00}{30}
\emline{57.00}{73.00}{31}{57.00}{63.00}{32}
\emline{57.00}{63.00}{33}{52.00}{58.00}{34}
\emline{42.00}{68.00}{35}{47.00}{73.00}{36}
\emline{47.00}{63.00}{37}{57.00}{63.00}{38}
\emline{52.00}{68.00}{39}{57.00}{73.00}{40}
\emline{57.00}{63.00}{41}{67.00}{63.00}{42}
\emline{67.00}{63.00}{43}{62.00}{58.00}{44}
\put(67.00,63.00){\circle*{1.50}}
\put(57.00,63.00){\circle*{1.50}}
\put(57.00,73.00){\circle*{1.50}}
\put(47.00,73.00){\circle*{1.50}}
\put(1.00,48.00){\makebox(0,0)[rc]{1001}}
\put(1.00,58.00){\makebox(0,0)[rc]{0001}}
\put(1.00,68.00){\makebox(0,0)[rc]{0101}}
\put(40.00,48.00){\makebox(0,0)[rc]{01001}}
\put(40.00,58.00){\makebox(0,0)[rc]{00001}}
\put(40.00,68.00){\makebox(0,0)[rc]{00101}}
\put(53.50,70.00){\makebox(0,0)[rc]{00100}}
\put(14.00,68.00){\makebox(0,0)[lc]{0100}}
\put(14.00,60.00){\makebox(0,0)[lb]{0000}}
\put(24.00,58.00){\makebox(0,0)[lc]{0010}}
\put(24.00,48.00){\makebox(0,0)[lc]{1010}}
\put(14.00,50.00){\makebox(0,0)[lb]{1000}}
\put(12.00,43.00){\makebox(0,0)[ct]{$\Gamma_4$}}
\put(52.00,43.00){\makebox(0,0)[ct]{$\Gamma_5$}}
\put(45.00,73.00){\makebox(0,0)[rc]{10101}}
\put(59.00,73.00){\makebox(0,0)[lc]{10100}}
\put(58.00,64.00){\makebox(0,0)[lb]{10000}}
\put(69.00,63.00){\makebox(0,0)[lc]{10010}}
\put(64.00,58.00){\makebox(0,0)[lc]{00010}}
\put(64.00,48.00){\makebox(0,0)[lc]{01010}}
\put(53.00,49.00){\makebox(0,0)[lb]{01000}}
\put(53.00,57.00){\makebox(0,0)[lt]{00000}}
\put(46.00,61.00){\makebox(0,0)[lt]{10001}}
\put(47.00,63.00){\circle*{1.50}}
\put(2.00,8.00){\circle*{1.50}}
\put(12.00,8.00){\circle*{1.50}}
\put(22.00,8.00){\circle*{1.50}}
\put(2.00,18.00){\circle*{1.50}}
\put(12.00,18.00){\circle*{1.50}}
\put(22.00,18.00){\circle*{1.50}}
\put(2.00,28.00){\circle*{1.50}}
\put(12.00,28.00){\circle*{1.50}}
\emline{2.00}{28.00}{45}{2.00}{8.00}{46}
\emline{2.00}{8.00}{47}{22.00}{8.00}{48}
\emline{22.00}{8.00}{49}{22.00}{18.00}{50}
\emline{22.00}{18.00}{51}{2.00}{18.00}{52}
\emline{2.00}{28.00}{53}{12.00}{28.00}{54}
\emline{12.00}{28.00}{55}{12.00}{8.00}{56}
\put(1.00,8.00){\makebox(0,0)[rc]{01001}}
\put(1.00,18.00){\makebox(0,0)[rc]{00001}}
\put(1.00,28.00){\makebox(0,0)[rc]{00101}}
\put(5.00,30.00){\makebox(0,0)[lc]{00100}}
\put(15.00,19.00){\makebox(0,0)[lb]{00000}}
\put(24.00,18.00){\makebox(0,0)[lc]{00010}}
\put(24.00,8.00){\makebox(0,0)[lc]{01010}}
\put(13.00,9.00){\makebox(0,0)[lb]{01000}}
\put(12.00,3.00){\makebox(0,0)[ct]{$\Lambda_5$}} 
\end{picture}
\caption{$\Gamma_4$, $\Gamma_5$ and $\Lambda_5$}
\label{slika-1}
\end{center}
\end{figure}
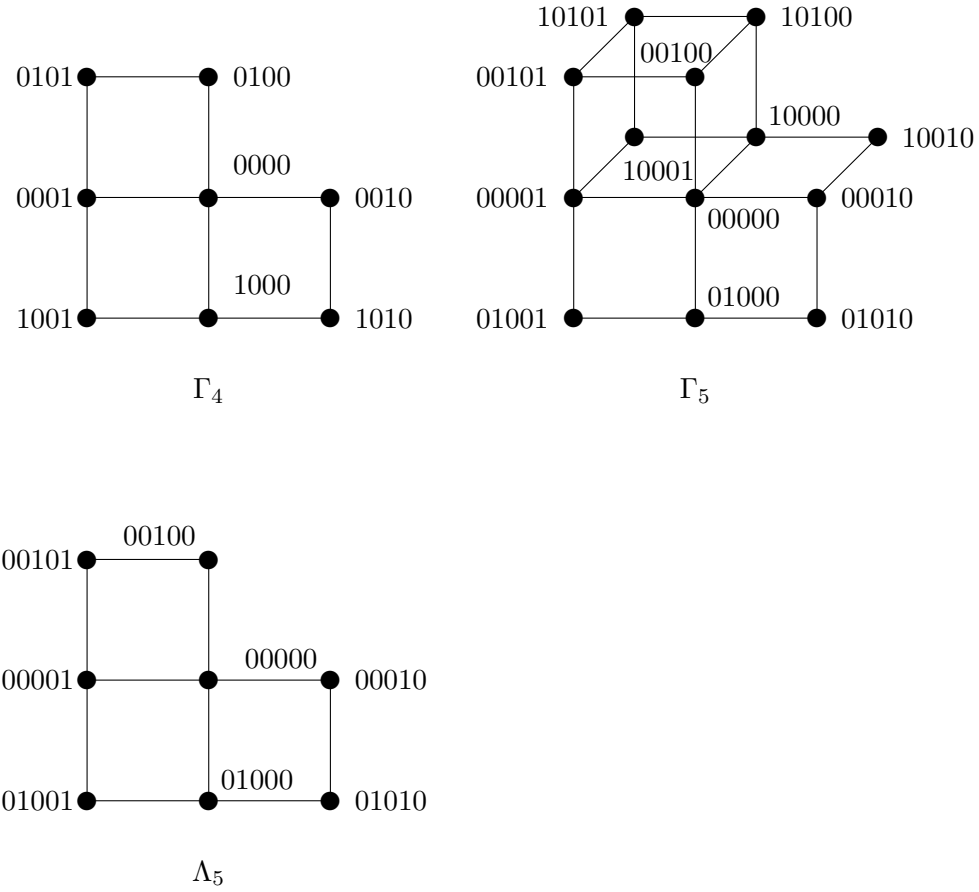
\vspace{5mm}
We sumarize the main results from \cite{klavzar2012fibo} in the following theorem.

\begin{theorem}
\label{thm:Wfb}
For Fibonacci cube $\Gamma_n$ and Lucas cube $\Lambda_n$ the following holds.
\begin{wlist}
\item[(i)] For any $n \geq 0$,  ${\displaystyle W(\Gamma_n) = \frac{1}{25}\left(4(n + 1)F_n^2 + (9n + 2)F_nF_{n+1} + 6nF_{n+1}^2\right)}$,
\item[(ii)] For any $n \geq 1$, ${\displaystyle W(\Lambda_n) = n F_{n-1} F_{n+1}}$,
\item[(iii)] ${\displaystyle \lim_{n\to \infty } \frac{\mu(\Gamma_n)}{n}}= {\displaystyle \lim_{n\to \infty } \frac{\mu(\Lambda_n)}{n} = \frac{2}{5}}$.
\end{wlist}
\end{theorem}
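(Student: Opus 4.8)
The plan is to use the fact that both $\Gamma_n$ and $\Lambda_n$ are isometric subgraphs of the hypercube $Q_n$ (a standard structural property of these cubes): the graph distance between two strings equals their Hamming distance, i.e.\ the number of coordinates in which they differ. Granting this, the Wiener index splits coordinate by coordinate. Writing $n_i^0$ and $n_i^1$ for the number of admissible strings having a $0$, respectively a $1$, in position $i$, one gets
\begin{equation*}
W(G) = \sum_{\{u,v\}} d(u,v) = \sum_{i=1}^n \#\{\{u,v\} : u,v \text{ differ in coordinate } i\} = \sum_{i=1}^n n_i^0 \, n_i^1
\end{equation*}
for $G \in \{\Gamma_n, \Lambda_n\}$. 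This is exactly the cut method for partial cubes, and it reduces everything to counting half-space sizes and summing the resulting Fibonacci products.

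First I would dispose of the Lucas cube, the easy case. A Lucas string is a cyclic binary string of length $n$ with no two (cyclically) consecutive $1$'s, so the cyclic symmetry makes $n_i^1$ independent of $i$. Fixing $b_i = 1$ forces its two cyclic neighbours to $0$ and leaves a linear Fibonacci string on the remaining $n-3$ positions, whence $n_i^1 = F_{n-1}$ and $n_i^0 = L_n - F_{n-1} = F_{n+1}$ by the identity $L_n = F_{n-1} + F_{n+1}$. Therefore $W(\Lambda_n) = \sum_{i=1}^n F_{n-1} F_{n+1} = n F_{n-1} F_{n+1}$, which is (ii).

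For the Fibonacci cube the coordinate counts are no longer uniform. Fixing $b_i = 1$ forces $b_{i-1} = b_{i+1} = 0$ and leaves independent Fibonacci strings on positions $1,\dots,i-2$ and $i+2,\dots,n$, giving $n_i^1 = F_i F_{n-i+1}$ and $n_i^0 = F_{n+2} - F_i F_{n-i+1}$ (with $F_0 = 0$). Substituting into the coordinate sum yields
\begin{equation*}
W(\Gamma_n) = F_{n+2}\sum_{i=1}^n F_i F_{n-i+1} \;-\; \sum_{i=1}^n \bigl(F_i F_{n-i+1}\bigr)^2 .
\end{equation*}
The first sum is the classical Fibonacci convolution and is linear in $n$ up to the usual closed form. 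The main obstacle is the second sum $\sum_i (F_i F_{n-i+1})^2$, a self-convolution of squared Fibonacci numbers, which I would evaluate either by convolving the known generating function $\sum_n F_n^2 x^n = x(1-x)/\bigl((1+x)(1-3x+x^2)\bigr)$ with itself and extracting the coefficient, or by inserting $F_k^2 = \tfrac15\bigl(L_{2k} - 2(-1)^k\bigr)$ and the product rule $L_aL_b = L_{a+b} + (-1)^b L_{a-b}$ to reduce everything to geometric-type sums of Lucas numbers. Collecting the $F_n^2$, $F_nF_{n+1}$ and $F_{n+1}^2$ contributions and simplifying with $F_{n+2} = F_{n+1} + F_n$ should reproduce the stated $\tfrac1{25}\bigl(4(n+1)F_n^2 + (9n+2)F_nF_{n+1} + 6nF_{n+1}^2\bigr)$; checking $n=2,3$ against direct computation is a cheap sanity test.

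Finally, part (iii) is a routine asymptotic consequence of (i) and (ii) via $\mu(G) = W(G)/\binom{|V(G)|}{2}$ and Binet's formula. Using $F_{k}/F_{k+1} \to 1/\varphi$ with $\varphi = (1+\sqrt5)/2$, the ratio $W(\Gamma_n)/\bigl(n\binom{F_{n+2}}{2}\bigr)$ tends to $\tfrac{2}{25}(4\varphi^{-4} + 9\varphi^{-3} + 6\varphi^{-2})$, and the identities $\varphi^{-2} = 2-\varphi$, $\varphi^{-3} = 2\varphi-3$, $\varphi^{-4} = 5-3\varphi$ collapse the bracket to $5$, giving the limit $2/5$; the Lucas case reduces even faster to $2 F_{n-1}F_{n+1}/L_n^2 \to 2/5$ since $L_n \sim \varphi^n$ and $F_k \sim \varphi^k/\sqrt5$. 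I expect the only genuinely laborious step to be the squared convolution in (i).
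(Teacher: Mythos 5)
The paper offers no proof of this theorem: it is explicitly a summary of the main results of \cite{klavzar2012fibo}, so there is no internal argument to compare against. Your sketch is, in substance, a reconstruction of the proof in that cited source: Klav\v{z}ar and Mollard likewise exploit that $\Gamma_n$ and $\Lambda_n$ are partial cubes (a premise available here since the paper records they are median graphs \cite{klavzar2005fibomedian}, and median graphs are isometrically embedded in hypercubes) and compute the Wiener index coordinatewise, arriving at exactly your cut identity $W(\Gamma_n)=\sum_{i=1}^n F_iF_{n-i+1}\bigl(F_{n+2}-F_iF_{n-i+1}\bigr)$ before evaluating the sums with Fibonacci identities. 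Your counts are correct: $n_i^1=F_iF_{n-i+1}$ for $\Gamma_n$ (checked: $n=3$ gives $2\cdot 3+1\cdot 4+2\cdot 3=16=W(\Gamma_3)$), and $n_i^1=F_{n-1}$, $n_i^0=L_n-F_{n-1}=F_{n+1}$ for $\Lambda_n$, which yields (ii) immediately; the limit computation in (iii) is also right, since $\varphi^{-2}=2-\varphi$, $\varphi^{-3}=2\varphi-3$, $\varphi^{-4}=5-3\varphi$ collapse $4\varphi^{-4}+9\varphi^{-3}+6\varphi^{-2}$ to $5$, giving $\frac{2}{25}\cdot 5=\frac{2}{5}$, and the Lucas case follows from $L_n^2\sim 5F_n^2$. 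Two points to tighten: the cyclic-neighbour argument for $\Lambda_n$ degenerates for $n\le 2$ (the two neighbours of a position coincide when $n=2$), so those cases should be verified directly — they do satisfy the formula; and the one laborious step, the closed form of $\sum_{i=1}^n\bigl(F_iF_{n-i+1}\bigr)^2$, is only planned, not executed, though both routes you propose (squaring the generating function $\sum_n F_n^2x^n=x(1-x)/\bigl((1+x)(1-3x+x^2)\bigr)$, or reducing via $F_k^2=\tfrac15\bigl(L_{2k}-2(-1)^k\bigr)$ and $L_aL_b=L_{a+b}+(-1)^bL_{a-b}$, which leaves $nL_{2n+2}$ plus geometric-type Lucas sums) are standard and do terminate in the stated formula (i), consistent with your sanity checks at $n=2,3$. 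In short: correct in approach and in all verifiable details, essentially the same route as the cited original, with only the mechanical convolution evaluation left unexpanded.
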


In \cite{klavzar2005fibomedian} Klav\v{z}ar has shown that Fibonacci and Lucas cubes are median graphs. Hence they are modular graphs and we can apply Theorem \ref{thm:sw3modular} and Theorem \ref{thm:Wfb} to obtain formulae for the Steiner 3-Wiener index of Fibonacci and Lucas cubes.

\begin{corollary} 
Let $k\geq 2$ be an integer. 
\begin{wlist}
\item[(i)] For any $n \geq 0$, $SW_3(\Gamma_n) = \frac{1}{50}(F_{n+2} -2)\left(4(n + 1)F_n^2 + (9n + 2)F_nF_{n+1} + 6nF_{n+1}^2\right)$,
\item[(ii)] For any $n \geq 1$, $SW_3(\Lambda_n) = \frac{n}{2} F_{n-1} F_{n+1} (L_{n}-2) $.
\end{wlist}
\end{corollary}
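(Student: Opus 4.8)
The plan is to combine the structural fact that Fibonacci and Lucas cubes are modular with the exact Wiener-index formulae already recorded in Theorem~\ref{thm:Wfb}, so that the corollary becomes a direct substitution into the identity of Theorem~\ref{thm:sw3modular}.

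First I would invoke Klav\v{z}ar's result that $\Gamma_n$ and $\Lambda_n$ are median graphs; since every median graph satisfies equality in (\ref{eq:modular}), both cubes are in particular modular. This is exactly the hypothesis needed to put Theorem~\ref{thm:sw3modular} into its equality form $SW_3(G) = \frac{|V(G)|-2}{2}\,W(G)$, rather than merely the inequality for general graphs.

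Next, for part (i), I would substitute the vertex count $|V(\Gamma_n)| = F_{n+2}$ and the Wiener index from Theorem~\ref{thm:Wfb}(i) into this equality:
\begin{align*}
SW_3(\Gamma_n) &= \frac{F_{n+2}-2}{2}\,W(\Gamma_n)\\
&= \frac{F_{n+2}-2}{2}\cdot\frac{1}{25}\left(4(n+1)F_n^2 + (9n+2)F_nF_{n+1} + 6nF_{n+1}^2\right),
\end{align*}
and collecting the numerical factor $\tfrac{1}{2}\cdot\tfrac{1}{25} = \tfrac{1}{50}$ yields the claimed expression. For part (ii) the same template applies with $|V(\Lambda_n)| = L_n$ and $W(\Lambda_n) = nF_{n-1}F_{n+1}$ from Theorem~\ref{thm:Wfb}(ii), giving $SW_3(\Lambda_n) = \frac{L_n-2}{2}\cdot nF_{n-1}F_{n+1} = \frac{n}{2}F_{n-1}F_{n+1}(L_n-2)$.

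Since each step is a substitution, there is no genuine analytic obstacle; the only care needed is bookkeeping at the small indices. I would check the boundary cases ($n=0$ for $\Gamma_n$ and $n=1$ for $\Lambda_n$) separately, using the conventions $\Gamma_0 = \Lambda_0 = K_1$ and $F_0 = 0$ to confirm the closed forms reduce correctly, both returning $SW_3 = 0$, consistent with the absence of any vertex triple. The substantive point to verify is merely that the \emph{equality} case of Theorem~\ref{thm:sw3modular} is legitimately available here, and it is, precisely because the median property is strictly stronger than the modularity condition the theorem requires.
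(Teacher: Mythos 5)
Your proposal is correct and follows exactly the paper's route: the paper likewise cites Klav\v{z}ar's result that $\Gamma_n$ and $\Lambda_n$ are median (hence modular) graphs and then obtains the corollary by substituting $|V(\Gamma_n)|=F_{n+2}$, $|V(\Lambda_n)|=L_n$ and the Wiener-index formulae of Theorem~\ref{thm:Wfb} into the equality case of Theorem~\ref{thm:sw3modular}. Your extra check of the boundary cases $n=0$ and $n=1$ is harmless bookkeeping beyond what the paper records, but the argument is the same.
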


The first values of the sequence $\{SW_3(\Gamma_n)\}$ are:\\
 $0, 0, 2, 24, 162, 968, 5206, 26672, 131652, 634752, 3006708, \ldots $\\
 
\indent The first values of the sequence $\{SW_3(\Lambda_n)\}$ are:\\
 $0, 0, 2, 9, 100, 540, 3120, 15876, 79560, 384615, 1830730 \ldots$

\begin{corollary} 
Let $k$ be an integer with $2\leq k$. Then
 $${\displaystyle \lim_{n\to \infty } \frac{\mu_k(\Gamma_n)}{n}}= {\displaystyle \lim_{n\to \infty } \frac{\mu_k(\Lambda_n)}{n} = \frac 35}.$$
\end{corollary}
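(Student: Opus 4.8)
The plan is to reduce the whole statement to the case $k=3$, where an exact identity between $\mu_3$ and $\mu$ is available for modular graphs, and then to read off the limit from Theorem~\ref{thm:Wfb}(iii). First I would record the identity
$$\mu_3(G)=\tfrac{3}{2}\,\mu(G)\qquad\text{for every modular graph } G \text{ on } n\ge 3 \text{ vertices.}$$
This follows by combining $\mu_k(G)=SW_k(G)/\binom{n}{k}$, the relation $W(G)=\binom{n}{2}\mu(G)$, and the equality $SW_3(G)=\tfrac{n-2}{2}W(G)$ furnished by Theorem~\ref{thm:sw3modular}: substituting gives $\mu_3(G)=\tfrac{n-2}{2}\binom{n}{2}\mu(G)/\binom{n}{3}$, and the binomial coefficients collapse, since $\tfrac{n-2}{2}\binom{n}{2}/\binom{n}{3}=\tfrac{3}{2}$.

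Next I would invoke the fact, due to Klav\v{z}ar, that $\Gamma_n$ and $\Lambda_n$ are median graphs and hence modular. Thus the identity of the previous paragraph applies to each $\Gamma_n$ and each $\Lambda_n$, giving $\mu_3(\Gamma_n)=\tfrac{3}{2}\mu(\Gamma_n)$ and $\mu_3(\Lambda_n)=\tfrac{3}{2}\mu(\Lambda_n)$. Dividing by $n$, letting $n\to\infty$, and substituting the value $2/5$ from Theorem~\ref{thm:Wfb}(iii) yields $\lim_{n\to\infty}\mu_3(\Gamma_n)/n=\lim_{n\to\infty}\mu_3(\Lambda_n)/n=\tfrac{3}{2}\cdot\tfrac{2}{5}=\tfrac{3}{5}$, which settles the case $k=3$ (while the case $k=2$, where $\mu_2=\mu$, records the companion value $2/5$).

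For $k\ge 4$ I would try to sandwich $\mu_k$ between $\mu_3$ and a multiple of $\mu$. The lower bound is easy: a short averaging argument (for each $(k{+}1)$-set $S'$ and each $v\in S'$ one has $d(S')\ge d(S'\setminus\{v\})$, and summing over $v$ and then over all $S'$) shows that $\mu_k$ is nondecreasing in $k$, so $\mu_k\ge\mu_3$ and hence $\liminf_{n\to\infty}\mu_k(\Gamma_n)/n\ge 3/5$. The main obstacle is the matching upper bound. The soft estimates available from Theorem~\ref{th4-1} only give $\mu_k(G)\le(k-1)\mu(G)$ (and the iterated bound $\mu_k\le\tfrac{k+1}{k-1}\mu_{k-1}$ is no sharper), so after normalization they merely bracket the limit between $3/5$ and $(k-1)\tfrac{2}{5}$ without pinning it down. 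Closing this gap is the crux, and I expect it requires a finer coordinate-based analysis rather than subadditivity: since $\Gamma_n$ and $\Lambda_n$ are partial cubes, the Steiner distance of a set $S$ is governed by the number of $\Theta$-classes (coordinates) on which the vertices of $S$ disagree, and one would estimate the expected number of such separating coordinates for a uniformly random $k$-subset as $n\to\infty$. Carrying out this expectation is precisely where the real work of the general case lies, and it is the step one must reconcile with the asserted value.
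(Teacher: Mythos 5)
Your argument for $k=3$ is correct and takes a genuinely different route from the paper. The paper proves the corollary by substituting the explicit formulas $SW_3(\Gamma_n)=\frac{1}{50}(F_{n+2}-2)\bigl(4(n+1)F_n^2+(9n+2)F_nF_{n+1}+6nF_{n+1}^2\bigr)$ and $SW_3(\Lambda_n)=\frac{n}{2}F_{n-1}F_{n+1}(L_n-2)$, dividing by $\binom{F_{n+2}}{3}$ resp.\ $\binom{L_n}{3}$, and evaluating the limits term by term via Binet's formula $F_{n+k}/F_n\to\varphi^k$ and the identity $L_n^2=5F_n^2+4(-1)^n$. Your observation that Theorem~\ref{thm:sw3modular} collapses to the exact identity $\mu_3(G)=\frac{3}{2}\mu(G)$ for every modular graph (since $\frac{n-2}{2}\binom{n}{2}\big/\binom{n}{3}=\frac{3}{2}$), so that the limit is just $\frac{3}{2}\cdot\frac{2}{5}$ by Theorem~\ref{thm:Wfb}(iii), is shorter, avoids all Fibonacci asymptotics, and explains conceptually why the answer is exactly $3/2$ times the Wiener limit. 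Your monotonicity argument showing $\mu_{k+1}\ge\mu_k$ (summing $d(S')\ge d(S'\setminus\{v\})$ over $v\in S'$ and over all $(k+1)$-sets, then comparing binomial coefficients) is also sound.

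Concerning the obstacle you flag for $k\ge 4$: you should know that the paper does not close this gap either. Despite the hypothesis ``$2\le k$'' in the statement, the paper's proof manipulates only $SW_3(\Gamma_n)\binom{F_{n+2}}{3}^{-1}$ and $SW_3(\Lambda_n)\binom{L_n}{3}^{-1}$, i.e., it proves the claim for $\mu_3$ alone; the ``$k$'' is vestigial, just as the preceding corollary carries a spurious ``Let $k\ge 2$ be an integer'' with no $k$ in its conclusion. Indeed, taken literally the statement is false at $k=2$, since $\mu_2=\mu$ and Theorem~\ref{thm:Wfb}(iii) gives the limit $2/5\ne 3/5$ --- exactly the companion value you record in passing. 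So your diagnosis is accurate: the sandwich $3/5\le\liminf$ versus the soft upper bounds of Theorem~\ref{th4-1} cannot pin down a limit for $k\ge 4$, and your suggested $\Theta$-class expectation analysis (where one would in fact expect a $k$-dependent constant, not $3/5$) is the right place to look; but this is a defect of the statement as printed, not of your proposal, which proves precisely the case the paper itself proves.
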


\begin{proof}
From Binet’s formula for the Fibonacci numbers, see \cite{wiki1}, it follows that
 \begin{equation*}
\label{eq2}
{\displaystyle \lim_{n\to \infty }{\frac {F_{n+k}}{F_{n}}}=\varphi^{k}={\frac {1+{\sqrt {5}}}{2}}}.
\end{equation*}

Then (i) and the fact that $|V(\Gamma_n)|=F_{n+2}$ imply:

\begin{align*}
{\displaystyle \lim_{n\to \infty } \frac{\mu_k(\Gamma_n)}{n}}&=\lim_{n \to \infty } \frac{ \left(F_{n+2} -2)(4(n + 1)F_n^2 + (9n + 2)F_nF_{n+1}+ 6nF_{n+1}^2\right) }{50 n} \binom{F_{n+2}}{3}^{-1} \\
&=\lim_{n \to \infty } \frac{ 3\left(4(n + 1)F_n^2 + (9n + 2)F_nF_{n+1}+ 6nF_{n+1}^2\right) } 
{25 n(F_{n+2} -3)(F_{n+2}-1))}\\
&= 3 \left( \frac{4}{25\varphi^{4}}+\frac{9}{25\varphi^{3}}+ \frac{6}{25\varphi^{2}} \right)\\
&= \frac 35
\end{align*}


See \cite{wiki2} for the following equality: $L_{n}^{2}=5F_{n}^{2}+4(-1)^{n}.$
Then (ii) and the fact that $|V(\Lambda_n)| = L_{n}$ imply:
\begin{align*}
\lim_{n\to \infty } \frac{\mu_k(\Lambda_n)}{n} &= 
{\displaystyle \lim_{n\to \infty }  \frac{n F_{n-1} F_{n+1} (L_{n}-2) }{2 n} \binom{L_{n}}{3}^{-1}}\\
&={\displaystyle \lim_{n\to \infty }  \frac{ 3 F_{n-1} F_{n+1}}{(L_{n})(L_{n}-1)}}\\
&={\displaystyle \lim_{n\to \infty }  \frac{ 3 F_{n-1} F_{n+1}}{5F_{n}^{2}+4(-1)^{n}}}\\
&= \frac 35
\end{align*}
\end{proof}

\section{Steiner 3-Wiener index of block graphs}
\label{section:3SWblock}


A block of a graph is a maximal connected vertex induced subgraph that has no cut vertices. A {\em block graph} is a graph in which every block is a clique. In \cite{yeh2008centers} Steiner distance related subsets of vertices of block graphs has been studied, followed by the study of Steiner $k$-Wiener index of block graphs in \cite{kovse2018block}.

A claw-free graph is a graph in which no induced subgraph is a claw, i.e. a complete bipartite graph $K_{1,3}.$ Claw-free block graphs are block graphs which are claw-free. They are equivalent to the line graphs of trees.

For a graph $G$, let $n(G)$ denote the number of its vertices. For a graph $G$ with $p$, $p \geq 3$, connected components $G_1, G_2, \ldots, G_p$ let
$$
N_3(G)=\sum_{\substack{\{i,j,k\} \subseteq \{1,2, \ldots, p\}}} n(G_i) \cdot n(G_j)   \cdot n(G_k)    
$$

For a graph $G$ with $p$, $p < 3$, connected components we set  $N_3(G)=0$. Note that $N_3(G)$ counts the number of triplets of vertices belonging to three different connected components of $G$.

Let $G$ be a block graph with blocks $B_1, B_2,\ldots, B_t$, and let $G \setminus B_i$ denote a graph obtained from $G$ by deleting all edges from block $B_i$. Let $nm(G)$ denote the number of non-modular triples of graph $G$. 

\begin{lemma}\label{lem:non-modular}
Let $G$ be a block graph with blocks $B_1, B_2, \ldots, B_t$.  Then 
$$nm(G)=   \sum_{i=1}^t N_3(G \setminus B_i).$$
\end{lemma}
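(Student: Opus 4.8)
The plan is to characterise exactly which triplets of a block graph are non-modular, to show that each non-modular triplet is localised in a unique block, and then to count. Throughout I would use three standard structural facts about block graphs: (a) between any two vertices the shortest path is unique, so that $I(u,v)$ is precisely the vertex set of this path; (b) each block $B$ is gated, meaning every vertex $u$ has a gate $g_B(u)\in B$ with $d(u,v)=d(u,g_B(u))+d(g_B(u),v)$ for all $v\in B$; and (c) every cycle of $G$ lies inside a single block. The bridge to the right-hand side is the observation that deleting all edges of a block $B_i$ splits $G$ into exactly $|B_i|$ components, one containing each vertex of $B_i$, with $u$ lying in the component of $v\in B_i$ precisely when $g_{B_i}(u)=v$. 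Hence $N_3(G\setminus B_i)$ counts exactly those triplets $\{x,y,z\}$ whose gates $g_{B_i}(x),g_{B_i}(y),g_{B_i}(z)$ are three distinct vertices of $B_i$.

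First I would prove that a triplet with three distinct gates into some block is non-modular. Fix $B_i$ and suppose $a=g_{B_i}(x)$, $b=g_{B_i}(y)$, $c=g_{B_i}(z)$ are distinct. Since $B_i$ is a clique, the gate property forces the unique $x$--$y$ path to enter $B_i$ at $a$ and leave at $b$, so every $w\in I(x,y)$ satisfies $g_{B_i}(w)\in\{a,b\}$; likewise $g_{B_i}(w)\in\{a,c\}$ for $w\in I(x,z)$ and $g_{B_i}(w)\in\{b,c\}$ for $w\in I(y,z)$. A common vertex $w\in I(x,y)\cap I(x,z)\cap I(y,z)$ would therefore have $g_{B_i}(w)\in\{a,b\}\cap\{a,c\}\cap\{b,c\}=\emptyset$, which is impossible. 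Thus the triplet is non-modular, and in particular no modular triplet can have three distinct gates into any block.

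For the converse and for uniqueness I would analyse the meeting pattern of the three geodesics. Let $a$ be the last common vertex of $I(x,y)$ and $I(x,z)$ as one moves away from $x$, and define $b,c$ symmetrically for $y,z$; these are well defined by fact (a). If two of $a,b,c$ coincide, say $a=b$, then this vertex lies on all three geodesics and is a median, so the triplet is modular. If $a,b,c$ are pairwise distinct, then the subpaths $a\rightsquigarrow b\subseteq I(x,y)$, $b\rightsquigarrow c\subseteq I(y,z)$, $c\rightsquigarrow a\subseteq I(x,z)$ are internally disjoint (their pairwise intersections are single endpoints, again by uniqueness of geodesics) and so form a cycle; by fact (c) this cycle lies in one block $B$, whence $a,b,c$ are three distinct vertices of $B$. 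A short check with the gate property gives $a=g_B(x)$, $b=g_B(y)$, $c=g_B(z)$, so $\{x,y,z\}$ has three distinct gates into $B$. Combined with the previous paragraph, a triplet is non-modular if and only if it has three distinct gates into some block. Uniqueness of that block follows because the divergence vertex of $I(x,y)$ and $I(x,z)$ from $x$ is unique: if two blocks $B,B'$ both received three distinct gates, this vertex would equal both $g_B(x)$ and $g_{B'}(x)$, and similarly for $y$ and $z$, forcing $B$ and $B'$ to share three vertices, impossible for distinct blocks.

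With the characterisation in hand the count is immediate: each non-modular triplet has exactly one median block $B_i$, namely the unique block into which its three gates are distinct, and this is precisely the block for which the triplet is recorded by $N_3(G\setminus B_i)$; conversely every triplet counted by some $N_3(G\setminus B_i)$ is non-modular. Summing over all blocks therefore counts each non-modular triplet exactly once, giving $nm(G)=\sum_{i=1}^t N_3(G\setminus B_i)$. I expect the main obstacle to be the converse/uniqueness step, namely proving that the branching of the three geodesics is confined to a single block and that this block is unique, since this is where the special geometry of block graphs (unique geodesics, gatedness, and cycles trapped inside cliques) is genuinely needed; the forward direction and the bookkeeping are comparatively routine.
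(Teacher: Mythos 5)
Your proposal is correct, but it takes a genuinely different route from the paper's proof. The paper argues by a case analysis on how the triplet $\{a,b,c\}$ is distributed over the blocks --- all three in one block, exactly two in a common block, or three in different blocks --- and in each case directly exhibits the unique block $B_i$ whose edge-deletion puts the triplet into three components (invoking claw-freeness of the subgraph $L(a,b,c)$ spanned by the relevant blocks in the third case). You instead prove a uniform characterisation: after reformulating $N_3(G\setminus B_i)$ in terms of gates, you show a triplet is non-modular if and only if its three gates into some block are pairwise distinct, using the divergence points of the three geodesics and the fact that every cycle of a block graph lies in a single block; uniqueness of that block then follows from the uniqueness of the divergence points rather than from a block-by-block comparison. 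Your route buys two things. First, your forward implication explicitly verifies that a modular triplet is never separated into three components by any block deletion --- a direction the equality genuinely needs but which the paper's proof, which only ever examines non-modular triplets, leaves implicit. Second, the gate/geodetic machinery is standard and would transfer to related settings, whereas the paper's case analysis is shorter, more elementary, and as a by-product makes visible the pseudo-median structure of block graphs and the identity $d(\{x,y,z\})=\frac{1}{2}\left(d(x,y)+d(x,z)+d(y,z)\right)+\frac{1}{2}$ for non-modular triplets, both of which the paper reuses in the proof of Theorem \ref{thm:SW3}. The only steps your sketch compresses are the internal disjointness of the three subpaths --- which requires the observation that the common vertices of two geodesics emanating from a fixed vertex form an initial segment, and this does follow from uniqueness of geodesics as you claim --- and the ``short check'' that the divergence points coincide with the gates; both go through as indicated, so the plan is sound.
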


\begin{proof}
Let $a,b,c \in V(G)$ be a non-modular triplet. Let $L(a,b,c)$ denote a subgraph of $G$ induced by all blocks $B_i$ with $B_i \cap (I(a,b)\cup I(a,c) \cup I(b,c)) \neq \emptyset$. We distinguish three cases.\\

{\bf Case 1.} $a,b$ and $c$ belong to the same block  $B_i$ of $G$. 

Hence $a,b$ and $c$ belong to three different connected components of $G\setminus (B_i)$ and for any other block $B_j$ they belong to the same connected component of $G\setminus (B_j)$.\\
 
{\bf Case 2.} Exactly two vertices of the triplet $a,b$ and $c$ belong to the same block.\\
W. l. o. g. let $a,b \in B_i$. Since $a \notin I(b,c)$ and $b \notin I(a,c)$ there exist a vertex $d \in B_i$ such that $d \in I(b,c) \cap I(a,c)$. Hence $a,b$ and $c$ belong to three different connected components of $G \setminus (B_i)$ and for any other block $B_j$ they belong to at most two connected components of $G\setminus (B_j)$.\\

{\bf Case 3.} $a,b$ and $c$ belong to three different blocks of $G$: $B_i,B_j$ and $B_k$.\\
Since $I(a,b)\cap I(a,c) \cap I(b,c)) = \emptyset$ it follows that $L(a,b,c)$ is a claw free subgraph of $G$. Hence for exactly one of the blocks $B_i,B_j$ and $B_k$ it holds that it has a nonempty intersection with the shortest path between the vertices from the remaining two blocks. W. l. o. g. let this be block $B_k$, where $c \in B_k$. Hence $a,b$ and $c$ belong to three different connected components of $G \setminus (B_k)$ and for any other block $B_j$ they belong to at most two connected components of $G\setminus(B_j)$.
\end{proof}

From the proof of Lemma \ref{lem:non-modular} it follows that block graphs are pseudo-median graphs: for every three vertices, either there exists a unique vertex that belongs to shortest paths between all three vertices, or there exists a unique triangle whose edges lie on these three shortest paths.

\begin{theorem}\label{thm:SW3}
Let $G$ be a block graph with blocks $B_1, B_2, \ldots, B_m$ and $|V(G)|=n$. Then $$SW_3(G) =  \frac{n-2}{2} W(G) + \frac 12 \sum_{i=1}^t N_3(G \setminus B_i).$$
\end{theorem}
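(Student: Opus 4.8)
The plan is to reduce $SW_3(G)$ to the modular case by separating the contribution of modular triplets from that of non-modular triplets. Recall from the proof of Theorem \ref{thm:sw3modular} that every modular triplet $\{a,b,c\}$ satisfies $d(\{a,b,c\}) = \frac 12\left(d(a,b)+d(a,c)+d(b,c)\right)$, while a non-modular triplet always has $d(\{a,b,c\})$ strictly larger than this half-sum. First I would write
\begin{equation*}
SW_3(G) = \sum_{\substack{\{a,b,c\}\subseteq V(G)}} d(\{a,b,c\}) = \sum_{\text{modular}} \tfrac 12\left(d(a,b)+d(a,c)+d(b,c)\right) + \sum_{\text{non-modular}} d(\{a,b,c\}).
\end{equation*}
The idea is to add and subtract the half-sum $\frac 12\left(d(a,b)+d(a,c)+d(b,c)\right)$ over the non-modular triplets so that the half-sum runs over \emph{all} triplets. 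Since each pair of vertices lies in $n-2$ triplets, the full half-sum collapses to $\frac{n-2}{2} W(G)$ exactly as in Theorem \ref{thm:sw3modular}, leaving a correction term
\begin{equation*}
SW_3(G) = \frac{n-2}{2} W(G) + \sum_{\text{non-modular }\{a,b,c\}} \left( d(\{a,b,c\}) - \tfrac 12\left(d(a,b)+d(a,c)+d(b,c)\right)\right).
\end{equation*}

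The crux is then to show that each non-modular triplet contributes exactly $1$ to the correction term, so that the sum equals the total count $nm(G)$ of non-modular triples, which by Lemma \ref{lem:non-modular} equals $\sum_{i=1}^t N_3(G\setminus B_i)$. This is the main obstacle and relies on the pseudo-median structure of block graphs noted after Lemma \ref{lem:non-modular}: for a non-modular triplet there is a unique triangle whose three edges lie on the three shortest paths $I(a,b), I(a,c), I(b,c)$. I would let $x,y,z$ be the three vertices of this median triangle lying on the paths opposite $c$, $b$, $a$ respectively, so that the Steiner tree for $\{a,b,c\}$ consists of three geodesics from $a,b,c$ into the triangle together with exactly one triangle edge. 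Using Theorem \ref{thm:2intersection}-type additivity along each geodesic inside a block graph, one computes $d(\{a,b,c\}) = d(a,x)+d(b,y)+d(c,z) + 1$ (the extra $+1$ being the single clique edge needed to connect the three branches), whereas $\frac 12(d(a,b)+d(a,c)+d(b,c))$ equals the same geodesic sum but counts the triangle perimeter as $\frac 12\cdot 3$ split appropriately, yielding a deficit of exactly $1$. I would verify this difference equals $1$ in each of the three cases of Lemma \ref{lem:non-modular}, the cleanest being when $a,b,c$ share a block $B_i$ (a clique), where $d(\{a,b,c\})=2$ but $\frac 12(1+1+1)=\frac 32$, giving difference $\frac 12$—so I must be careful that distances within a clique are all $1$ and recompute; the correct normalization is that each non-modular triplet contributes precisely $\frac 12$ to the correction after accounting, matching the factor $\frac 12$ in the claimed formula.

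Assembling these pieces gives
\begin{equation*}
SW_3(G) = \frac{n-2}{2} W(G) + \frac 12\, nm(G) = \frac{n-2}{2} W(G) + \frac 12 \sum_{i=1}^t N_3(G\setminus B_i),
\end{equation*}
where the last equality is Lemma \ref{lem:non-modular}. The delicate step throughout is confirming that the per-triplet excess $d(\{a,b,c\}) - \frac 12\left(d(a,b)+d(a,c)+d(b,c)\right)$ is identically $\frac 12$ for every non-modular triplet in a block graph; this is exactly where the claw-free/clique-block hypothesis is used, since it forces the median structure to be a single triangle (contributing one extra edge with the surrounding geodesics counted twice in the half-sum) rather than a more complicated Steiner topology.
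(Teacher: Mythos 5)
Your overall strategy is exactly the paper's: split the sum over modular and non-modular triplets, note that the half-sum $\frac 12\left(d(a,b)+d(a,c)+d(b,c)\right)$ taken over \emph{all} triplets collapses to $\frac{n-2}{2}W(G)$ because each pair lies in $n-2$ triplets, and identify the leftover correction with $\frac 12\,nm(G)$, converted via Lemma \ref{lem:non-modular} into $\frac 12\sum_{i=1}^t N_3(G\setminus B_i)$. The paper does precisely this, extracting from the proof of Lemma \ref{lem:non-modular} (via the pseudo-median structure) that every non-modular triplet of a block graph satisfies $d(\{x,y,z\}) = \frac 12\left(d(x,y)+d(x,z)+d(y,z)\right) + \frac 12$.

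However, your justification of that key per-triplet excess is wrong as written, and your patch is asserted rather than proved. If $x,y,z$ are the gates of $a,b,c$ on the unique median triangle, the Steiner tree consists of the three geodesics together with \emph{two} edges of the triangle (a spanning tree of $K_3$ has two edges), so $d(\{a,b,c\}) = d(a,x)+d(b,y)+d(c,z)+2$, not the $+1$ you claim; meanwhile each pairwise geodesic crosses exactly one triangle edge, so $\frac 12\left(d(a,b)+d(a,c)+d(b,c)\right) = d(a,x)+d(b,y)+d(c,z)+\frac 32$, giving excess exactly $\frac 12$ — never the ``deficit of exactly $1$'' you first state. Your $K_3$ sanity check correctly exposes the inconsistency ($2$ versus $\frac 32$), but you then retreat to ``the correct normalization is $\frac 12$'' without redoing the general computation, and that computation \emph{is} the crux of the theorem. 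The repair is the one-line recomputation just given, and it covers all three cases of Lemma \ref{lem:non-modular}: when $a,b,c$ share a block the gates are $a,b,c$ themselves; when exactly two share a block $B_i$ the third gate is the vertex $d\in B_i$ from Case 2; and in Case 3 the triangle sits in the unique separating block $B_k$. With the excess $\frac 12$ established uniformly, your final assembly $SW_3(G) = \frac{n-2}{2}W(G) + \frac 12\,nm(G) = \frac{n-2}{2}W(G) + \frac 12\sum_{i=1}^t N_3(G\setminus B_i)$ coincides with the paper's proof.
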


\begin{proof}Let $M(G)$ denote the set of all modular triplets of $G$ and $NM(G)$ the set of all non-modular triplets. From the proof of Lemma \ref{lem:non-modular} it follows that for $x,y, z \in NM(G)$ we have $$d(\{x,y,z\}) =  \frac 12 (d(x,y) + d(x,z) + d(y,z)) + \frac 12.$$ Using the same argument as in the proof of Theorem \ref{thm:sw3modular} and Lemma \ref{lem:non-modular} it follows
\begin{align*}
SW_3(G) &= \sum_{x,y,z\in M(G)} d(\{x,y,z\}) + \sum_{x,y,z\in NM(G)} d(\{x,y,z\}) \\
&= \sum_{x,y,z\in M(G)} \left(\frac 12 (d(x,y) + d(x,z) + d(y,z)\right)\\
 &     +\sum_{x,y,z\in NM(G)} \left(\frac 12 (d(x,y) + d(x,z) + d(y,z)) + \frac 12 \right)\\
 &=  \frac{n-2}{2} W(G) + \frac 12 nm(G)\\
&= \frac{n-2}{2} W(G) + \frac 12  \sum_{i=1}^t N_3(G \setminus B_i).
\end{align*}

\end{proof}

\section{Conclusion}
\label{section:conclusion}

One possible way to compute the Steiner 3-Wiener index of an arbitrary graph $G$ could be to use a similar approach as in Section \ref{section:3SWblock} and by finding the number of non-modular triplets of $G$ establish a relation between the Steiner 3-Wiener index and Wiener index of $G$.

\begin{problem}
Find a simple procedure to compute the number of non-modular triplets in a graph $G$.\end{problem}

\begin{problem}
Establish a relation between the Steiner 3-Wiener index and Wiener index of a graph $G$ belonging to a particular graph family.
\end{problem}

\vspace{-2mm}
\Addresses

\end{document}